\documentclass{amsart}

\usepackage{amssymb, amsmath,mathtools}
\mathtoolsset{showonlyrefs}
\usepackage{mathrsfs}%
\usepackage{amscd}
\usepackage{verbatim}
\usepackage{stmaryrd}
\usepackage[dvipsnames]{xcolor}

\usepackage{enumerate}

\usepackage[colorlinks,linkcolor={blue},citecolor={blue},urlcolor={purple},]{hyperref}

\usepackage[colorinlistoftodos,prependcaption,textsize=tiny]{todonotes}

\usepackage{doi}

\theoremstyle{plain}
\newtheorem{theorem}{Theorem}[section]
\theoremstyle{remark}
\newtheorem{remark}[theorem]{Remark}

\theoremstyle{plain}

\newtheorem{problem}{Problem}
\numberwithin{equation}{section}

\def\R{{\mathbb R}}
\def\C{{\mathbb C}}

\renewcommand\Re{\operatorname{Re}}

\newcommand{\calL}{\mathcal{L}}

\newcommand{\wt}{\widetilde}

\usepackage{stmaryrd}

\renewcommand{\div}{\normalfont{\text{div}}}

\newcommand{\norm}[1]{{\left\vert\kern-0.25ex\left\vert\kern-0.25ex\left\vert #1
    \right\vert\kern-0.25ex\right\vert\kern-0.25ex\right\vert}}

\def\XXint#1#2#3{{\setbox0=\hbox{$#1{#2#3}{\int}$ }
\vcenter{\hbox{$#2#3$ }}\kern-.6\wd0}}

\newcommand{\red}[1]{}

\newcommand{\lb}{\langle}
\newcommand{\rb}{\rangle}

\allowdisplaybreaks

\begin{document}

\date\today

\title[Counterexamples to maximal regularity]{Counterexamples to maximal regularity\\ for operators in divergence form}

\thanks{The first-named author is supported by the Alexander von Humboldt foundation by a Feodor Lynen grant. The second-named author is supported by NSF CAREER Grant DMS-2143668 and a Sloan Research Fellowship. The third-named author is supported by the VICI subsidy VI.C.212.027 of the Netherlands Organisation for Scientific Research (NWO)}

\author{Sebastian Bechtel}
\address{Delft Institute of Applied Mathematics, Delft University of Technology, P.O. Box 5031, 2600 GA Delft, The Netherlands}
\email{S.Bechtel@tudelft.nl}

\author{Connor Mooney}
\address{Department of Mathematics, UC Irvine Rowland Hall 410C, Irvine, CA 92697-3875, USA}
\email{mooneycr@math.uci.edu}

\author{Mark Veraar}
\address{Delft Institute of Applied Mathematics, Delft University of Technology, P.O. Box 5031, 2600 GA Delft, The Netherlands}
\email{M.C.Veraar@tudelft.nl}

\subjclass[2010]{Primary: 35K90; Secondary: 35B65}

\keywords{Maximal $L^p$-regularity, counterexamples, Lions' problem, divergence form operators}

\begin{abstract}
In this paper, we present counterexamples to maximal $L^p$-regularity for a parabolic PDE. The example is a second-order operator in divergence form with space and time-dependent coefficients. It is well-known from Lions' theory that such operators admit maximal $L^2$-regularity on $H^{-1}$ under a coercivity condition on the coefficients, and without any regularity conditions in time and space. We show that in general one cannot expect maximal $L^p$-regularity on $H^{-1}(\R^d)$ or $L^2$-regularity on $L^2(\R^d)$.
\end{abstract}

\maketitle

\section{Introduction}

Let $V$ and $H$ be complex Hilbert spaces such that $V\hookrightarrow H$ densely and continuously.
Identifying $H^*$ with its dual, we can view $H$ as a subspace of $V^*$, which is the dual of $V$. We start with the abstract problem
\begin{equation}
\label{eq:abstractMReq}
\tag{CP}
\left\{
\begin{aligned}
u' - \mathcal{A}u &= f, \ \ \text{on} \  (0,1),\\
u(0)&=0.
\end{aligned}\right.
\end{equation}
Here $\mathcal{A}:(0,1)\to \calL(V, V^*)$ is strongly measurable and $f\in L^2(0,1;V^*)$.
Moreover, we suppose that there are $\Lambda, \lambda > 0$ such that for all $t\in (0,1)$ and $v\in V$ one has
\begin{align*}
	\| \mathcal{A}(t) v \|_{V^*} \leq \Lambda \| v \|_V, \qquad \Re \langle \mathcal{A}(t) v, v \rangle \geq \lambda \| v \|_V^2.
\end{align*}
By Lions' theory~\cite{Lions61} it is known that \eqref{eq:abstractMReq} has a unique weak solution $u\in H^1(0,1;V^*)\cap L^2(0,1;V)$.
More generally (see \cite[XVIII.3.5]{DLv5}), if $f\in L^2(0,1;V^*)+L^1(0,1;H)$, then \eqref{eq:abstractMReq} has a unique weak solution $u\in L^2(0,1;V)$ such that $u'\in L^2(0,1;V^*)+L^1(0,1;H)$.

\noindent In this paper we study two problems concerning the regularity of $u$.
\begin{problem}\label{problem1}
Let $p\in (1, \infty)\setminus \{2\}$. Under what condition on $\mathcal{A}$ does the following hold: for all $f\in L^p(0,1;V^*)$ there is a unique weak solution $u$ which is in $L^p(0,1;V)$?
\end{problem}
Using the equation this also gives $u\in H^{1,p}(0,1;V^*)$.
Problem~\ref{problem1} can equivalently be formulated to the question whether there is a constant $C>0$ such that for all step functions $f$ valued in $H$ and with $u$ the unique solution to~\eqref{eq:abstractMReq} given by Lions' result for $p=2$ one has the estimate
\[\|u\|_{L^p(0,1;V)}\leq C \|f\|_{L^p(0,1;V^*)}.\]

It is well-known that regularity results fail for the endpoint cases $p=1$ and $p=\infty$ unless $V = V^*$ and thus $\mathcal{A}$ is a family of bounded operators on $V$ (see \cite{Baillon,Guerre} and also \cite[Thm.~17.4.4 \& Cor.~17.4.5]{HNVW3}). Hence, we can safely concentrate on the case $p\in (1,\infty)$ in this paper.

The second problem is a variation of Lions' problem \cite[p. 68]{Lions61}, who originally asked this question for symmetric $\mathcal{A}$.
\begin{problem}\label{problem2}
Under what condition on $\mathcal{A}$ does the following hold: for all $f\in L^2(0,1;H)$ the unique weak solution $u$ satisfies $u'\in L^2(0,1;H)$?
\end{problem}

Again using the equation one also has $\mathcal{A} u\in L^2(0,1;H)$. However, it is unclear what this tells about the regularity of $u$ since the domain of the operators $\mathcal{A}(t)$ in $H$ are not easy to describe in general.

Both problems have in common (at least if $p>2$) that they imply regularity of $u$ such as $u\in C^{\alpha}([0,1];[H,V]_{\lambda})$ for some $\alpha>0$ and $\lambda\in (0,1)$. Such properties are for instance useful in the study of non-linear problems. They follow from standard interpolation estimates and Sobolev embeddings. In Lions' general $L^2$-setting one can only obtain $C([0,1];H)$ or H\"{o}lder regularity of small order, compare with Theorem~\ref{thm:Sneiberg}.

If $\mathcal{A}$ is autonomous, that is to say, the family $\mathcal{A}(t)$ does not depend on $t$, then in both problems the answer is affirmative. Indeed, in Problem~\ref{problem1} this can be concluded from Lions' result and~\cite[Thm.~17.2.31]{HNVW3}. Concerning Problem~\ref{problem2}, this follows from a result of de Simon~\cite{DeSimon}.

Otherwise, some conditions are needed. In the case of Problem~\ref{problem1} it is sufficient that the mapping $t \mapsto \mathcal{A}(t) \in \calL(V, V^*)$ is (piecewise relatively) continuous \cite{ACFP, PS01}. Without any continuity it is only known that there is some $\varepsilon > 0$ depending on $\Lambda$, $\lambda$ such that Problem~\ref{problem1} holds with $p \in (2-\varepsilon,2+\varepsilon)$, see Theorem~\ref{thm:Sneiberg}. For Problem~\ref{problem2} the situation is even more delicate. Based on an abstract counterexample for the Kato square root problem due to McIntosh, Dier constructed a first nonsymmetric counterexample to Problem~\ref{problem2} in his PhD thesis, see also~\cite[Sec.~5]{Lions-Survey}. Moreover, Fackler constructed a counterexample that is symmetric and $C^\frac{1}{2}$-H\"{o}lder continuous~\cite{Fackler16}. To the contrary, with slightly more regularity (for instance $C^{\frac{1}{2}+\varepsilon}$-H\"{o}lder regularity), many positive results were given~\cite{AO19,AE16,B, DZ17,Fackler18,HO15,PS01}. We also recommend the survey~\cite{Lions-Survey} for an overview of Problem~\ref{problem2}.

The counterexamples due to Dier and Fackler are abstract and not differential operators. As is highlighted by the solution to the Kato square root problem~\cite{Kato}, the extra structure of a differential operator can be beneficial compared to the general situation. It is hence of interest to find counterexamples to Problems~\ref{problem1} and~\ref{problem2} that are differential operators. For Problem~\ref{problem2} this was explicitly pointed out in \cite[Problem 6.1]{Fackler18} and \cite[Prop.~12.1]{Lions-Survey}. To be more precise with our setting, we work with $H = L^2(\R^d)$ and $V = H^1(\R^d)$. If $B \colon [0,1] \times \R^d \to \C^{d\times d}$ is elliptic in the sense that there exist constants $\Lambda, \lambda > 0$ such that for all $t \in [0,1]$ and $x\in \R^d$ one has
\begin{align}
	|B(t,x) \xi| \leq \Lambda |\xi|, \qquad \Re B(x,t) \xi \cdot \overline{\xi} \geq \lambda |\xi|^2 \qquad (\xi \in \C^d),
\end{align}
then consider the problem
\begin{equation}
	\tag{P}
	\label{eq:elliptic_eq}
	\left\{
	\begin{aligned}
		u' - \div(B\nabla u) &= f, \quad \text{on } (0,1),\\
		u(0)&=0,
	\end{aligned}\right.
\end{equation}
where $f \in L^2(0,1; H^{-1}(\R^d))$. In the notation of~\eqref{eq:abstractMReq} we have put $\mathcal{A}(t)v = -\div(B(t)\nabla v)$.
Lions' theory yields a unique solution $u$ in the regularity class $H^1(0,1; L^2(\R^d)) \cap L^2(0,1; H^1(\R^d))$.
Problems~\ref{problem1} and~\ref{problem2} ask if for all elliptic coefficients one has the regularity $u\in L^p(0,1; H^1(\R^d))$ or $u' \in L^2(0,1; L^2(\R^d))$ when the forcing term $f$ is taken from $L^p(0,1; H^{-1}(\R^d))$ or $L^2(0,1; L^2(\R^d))$.
The answer is negative in both cases and the respective counterexamples will be the content of the present article. More precisely, based on a construction of the second-named author \cite{Mooney2}, we will obtain equations whose solution fails to have higher $L^r(L^s)$-integrability (Theorem~\ref{thm:LrLscounter}). Based on this result, we produce a counterexample to Problem~\ref{problem1} for every $p\neq 2$ in Theorem~\ref{thm:LpHmin1}. In particular, this shows sharpness of Theorem~\ref{thm:Sneiberg} which we mentioned briefly above. Concerning Problem~\ref{problem2}, we will show in Theorem~\ref{thm:L2L2} that it fails in the worst possible way for general elliptic problems in divergence form.

\subsection*{Acknowledgment} The authors thank the anonymous referee for their comments on the manuscript.

\section{Maximal regularity in Hilbert spaces}

\subsection{Known positive results}

\noindent
We present the results regarding Problem~\ref{problem1} that were already discussed in the introduction.

A function $u\in L^2(0,1;V) \cap H^1(0,1; V^*)$ is called a solution to \eqref{eq:abstractMReq} if for all $t\in [0,1]$,
\[u(t) - \int_0^t \mathcal{A}u ds = \int_0^t fds,\]
where the integrals are defined as Bochner integrals in $V^*$.

One can check that $u$ is a solution to \eqref{eq:abstractMReq} if and only if for all $\phi\in C^\infty_c((0,1);V)$ one has
\begin{align}\label{eq:weakabstract}
-\int_0^1 \lb u(s), \phi'(s)\rb ds - \int_0^1 \lb \mathcal{A}(s)u(s), \phi(s)\rb  ds  = \int_0^1 \lb f(s), \phi(s)\rb ds.
\end{align}

\begin{theorem}[Lions]\label{thm:Lions}
	In the situation of~\eqref{eq:abstractMReq} there is a unique solution $u\in L^2(0,1;V) \cap H^1(0,1; V^*)$. Constants in the maximal regularity estimate depend only on $\Lambda$ and $\lambda$.
\end{theorem}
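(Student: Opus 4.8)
The statement is the classical theorem of Lions, and the plan is to derive it from the non-symmetric Lax--Milgram lemma (\emph{Lions' representation theorem}; see \cite{Lions61} and \cite[Ch.~XVIII]{DLv5}) applied to a variational formulation of \eqref{eq:abstractMReq} on $L^2(0,1;V)$, and afterwards to upgrade the regularity and recover the initial datum by hand. Recall from the excerpt that a function in $L^2(0,1;V)\cap H^1(0,1;V^*)$ solves \eqref{eq:abstractMReq} exactly when it satisfies \eqref{eq:weakabstract} for all $\phi\in C_c^\infty((0,1);V)$. I would therefore seek $u\in\mathcal H:=L^2(0,1;V)$ with $E(u,\phi)=L(\phi)$ for all $\phi$ in the test space
\[
\mathcal W:=\{\phi\in L^2(0,1;V):\phi'\in L^2(0,1;V^*),\ \phi(1)=0\},
\]
where $E$ denotes the bilinear form on the left of \eqref{eq:weakabstract} and $L(\phi):=\int_0^1\langle f(s),\phi(s)\rangle\,\dd s$. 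The trace $\phi(1)$ is meaningful because $\{\phi\in L^2(0,1;V):\phi'\in L^2(0,1;V^*)\}\embed C([0,1];H)$, with the integration-by-parts formula $\frac{\dd}{\dd t}\|\phi(t)\|_H^2=2\Re\langle\phi'(t),\phi(t)\rangle$ (see \cite[Ch.~XVIII]{DLv5}); imposing that test functions vanish at the \emph{terminal} time is precisely what will force the \emph{initial} condition $u(0)=0$.

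\textbf{Solving the variational problem.} Equip $\mathcal W$ with the norm it inherits from $\mathcal H$ --- it need not be complete, which is exactly what Lions' representation theorem tolerates. I would check its three hypotheses: (i) $|L(\phi)|\le\|f\|_{L^2(0,1;V^*)}\,\|\phi\|_{\mathcal H}$ by Cauchy--Schwarz; (ii) for fixed $\phi\in\mathcal W$ the map $u\mapsto E(u,\phi)$ is bounded on $\mathcal H$, by $\|\mathcal A(t)\|_{\calL(V,V^*)}\le\Lambda$ together with $\phi'\in L^2(0,1;V^*)$; and (iii) the coercivity estimate $\lvert E(\phi,\phi)\rvert\ge\lambda\,\|\phi\|_{\mathcal H}^2$ for all $\phi\in\mathcal W$, which I would obtain by taking real parts, using that the ``transport'' contribution equals $-\Re\int_0^1\langle\phi,\phi'\rangle\,\dd s=\tfrac12\|\phi(0)\|_H^2\ge0$ (valid since $\phi(1)=0$) and invoking the coercivity hypothesis on $\mathcal A$. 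Lions' representation theorem then produces $u\in\mathcal H$ with $E(u,\phi)=L(\phi)$ on $\mathcal W$ and $\|u\|_{L^2(0,1;V)}\le\lambda^{-1}\|f\|_{L^2(0,1;V^*)}$.

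\textbf{Upgrading, initial datum, and uniqueness.} Restricting the identity to $\phi\in C_c^\infty((0,1);V)\subset\mathcal W$ shows that $u$ has a weak time derivative with $u'=\mathcal A u+f\in L^2(0,1;V^*)$, as $\|\mathcal A u\|_{L^2(0,1;V^*)}\le\Lambda\|u\|_{L^2(0,1;V)}$; thus $u\in H^1(0,1;V^*)$ solves \eqref{eq:abstractMReq}, $u\in C([0,1];H)$, and $\|u'\|_{L^2(0,1;V^*)}\le(1+\Lambda\lambda^{-1})\|f\|_{L^2(0,1;V^*)}$, so the maximal regularity estimate holds with constant depending only on $\lambda$ and $\Lambda$. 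For the initial datum, I would insert the integration-by-parts formula for $u,\phi\in\{v\in L^2(0,1;V):v'\in L^2(0,1;V^*)\}$ into $E(u,\phi)=L(\phi)$ for a general $\phi\in\mathcal W$ and subtract the equation tested against $\phi$; what survives is $\langle u(0),\phi(0)\rangle_H=0$ for all $\phi\in\mathcal W$, and since $\{\phi(0):\phi\in\mathcal W\}\supseteq V$ is dense in $H$ (take e.g.\ $\phi(t)=(1-t)h$ with $h\in V$), this gives $u(0)=0$. Uniqueness inside $L^2(0,1;V)\cap H^1(0,1;V^*)$ follows from the energy identity: the difference $w$ of two solutions obeys $\frac{\dd}{\dd t}\|w(t)\|_H^2=2\Re\langle\mathcal A(t)w(t),w(t)\rangle$, whence $w\equiv0$ by coercivity and $w(0)=0$; the same computation identifies $u$ with the solution of \cite{DLv5}.

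\textbf{Main difficulty, and an alternative.} As this is a classical result there is no deep obstacle, but the step demanding the most care is the coercivity estimate (iii): it relies on the correct one-sided vanishing of the test functions and on the fact that the transport term $-\int\langle u,\phi'\rangle$ contributes the nonnegative boundary quantity $\tfrac12\|\phi(0)\|_H^2$, so that \emph{strict} coercivity --- and hence a bound with no dependence on the length of the time interval --- is furnished entirely by the ellipticity of $\mathcal A$. One should also bear in mind that Lions' representation theorem yields only existence (the inf--sup condition is one-sided), so uniqueness genuinely requires the separate energy argument, which itself rests on $\{v\in L^2(0,1;V):v'\in L^2(0,1;V^*)\}\embed C([0,1];H)$ and the integration-by-parts formula there. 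As an alternative, the theorem admits the classical Galerkin proof: solve the finite-dimensional Carath\'eodory systems with zero initial data, derive the same a priori bound by testing with the Galerkin approximant, extract a subsequence converging weakly in $L^2(0,1;V)$, and identify the limit; the estimates are identical, and only the passage to the limit in the initial condition needs a small extra argument.
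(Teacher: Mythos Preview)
The paper does not supply its own proof of Theorem~\ref{thm:Lions}; it is quoted as Lions' classical result with a reference to \cite{Lions61}, so there is no in-paper argument to compare against. Your outline via Lions' representation theorem is precisely one of the two textbook proofs (the Galerkin alternative you sketch being the other), and the architecture --- existence from the one-sided inf--sup bound on $\mathcal H\times\mathcal W$, the upgrade $u'=\mathcal A u+f\in L^2(0,1;V^*)$ from the equation, the initial datum from testing against all of $\mathcal W$, and uniqueness from the energy identity --- is correct.

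One point does deserve care. The paper's sign conventions are internally inconsistent: \eqref{eq:abstractMReq} reads $u'-\mathcal A u=f$ with $\Re\langle\mathcal A v,v\rangle\ge\lambda\|v\|_V^2$, yet with $\mathcal A(t)v=-\div(B(t)\nabla v)$ the concrete problem \eqref{eq:elliptic_eq} becomes $u'+\mathcal A u=f$. If you take $E$ literally from \eqref{eq:weakabstract} as you say, then
\[
\Re E(\phi,\phi)=\tfrac12\|\phi(0)\|_H^2-\int_0^1\Re\langle\mathcal A(s)\phi(s),\phi(s)\rangle\,\dd s\le\tfrac12\|\phi(0)\|_H^2-\lambda\|\phi\|_{\mathcal H}^2,
\]
which is the wrong inequality, and the same sign obstructs your uniqueness argument since $\tfrac{\dd}{\dd t}\|w\|_H^2=2\Re\langle\mathcal A w,w\rangle\ge0$ only yields growth. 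This is a slip in the paper rather than in your strategy; once the intended sign is restored (equivalently, work with $-\mathcal A$), both steps go through exactly as you describe, with constants depending only on $\lambda$ and $\Lambda$.
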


Based on a perturbation principle for isomorphisms in complex interpolation scales due to Sneiberg~\cite{Sneiberg-Original}, Lions' result can be extended to $p$ close to $2$, see~\cite[Thm.~4.2]{DiElRe}.

\begin{theorem}\label{thm:Sneiberg}
In the situation of Lions' result (Theorem~\ref{thm:Lions}) there exists $\varepsilon > 0$ depending on $\Lambda$, $\lambda$ and the pair $(V,H)$ such that Problem~\ref{problem1} has a positive solution for $p\in (2-\varepsilon, 2+\varepsilon)$.
\end{theorem}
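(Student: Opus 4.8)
The plan is to combine Lions' theorem (Theorem~\ref{thm:Lions}) with Sneiberg's stability principle for isomorphisms on complex interpolation scales \cite{Sneiberg-Original}, following the argument of \cite[Thm.~4.2]{DiElRe}. For $p\in(1,\infty)$, write $\mathbb{M}_p:=\{u\in L^p(0,1;V)\cap H^{1,p}(0,1;V^*):u(0)=0\}$, which via the equation is the natural solution space, and consider the bounded operator $\mathcal{L}_p\colon\mathbb{M}_p\to L^p(0,1;V^*)$ defined by $\mathcal{L}_pu:=u'-\mathcal{A}u$. Then Problem~\ref{problem1} has a positive answer at exponent $p$ if and only if $\mathcal{L}_p$ is an isomorphism: surjectivity is solvability, boundedness of $\mathcal{L}_p^{-1}$ is the maximal regularity estimate, and injectivity is uniqueness. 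Theorem~\ref{thm:Lions} states precisely that $\mathcal{L}_2$ is an isomorphism with $\|\mathcal{L}_2^{-1}\|$ bounded in terms of $\Lambda$ and $\lambda$.

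Next I would check that the two families of spaces form complex interpolation scales. Fix $1<p_0<2<p_1<\infty$ and for $\theta\in(0,1)$ let $1/p_\theta=(1-\theta)/p_0+\theta/p_1$. For the data spaces, $[L^{p_0}(0,1;V^*),L^{p_1}(0,1;V^*)]_\theta=L^{p_\theta}(0,1;V^*)$ is the classical complex interpolation of Bochner spaces with a fixed target. For the solution spaces I claim $[\mathbb{M}_{p_0},\mathbb{M}_{p_1}]_\theta=\mathbb{M}_{p_\theta}$ with uniformly equivalent norms. Without the trace constraint this follows from the corresponding identities for $L^p(0,1;V)$ and $H^{1,p}(0,1;V^*)$ together with the stability of complex interpolation under intersections, realising $L^p(0,1;V)\cap H^{1,p}(0,1;V^*)$ as a complemented subspace of the direct sum via $u\mapsto(u,u')$. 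The initial condition is incorporated by recalling the trace embedding $\mathbb{M}_p\hookrightarrow C([0,1];(V^*,V)_{1-1/p,p})$: the map $u\mapsto u(0)$ is surjective onto the trace space and admits a bounded right inverse that can be chosen independent of $p$, so $\mathbb{M}_p$ is a complemented subspace of the unconstrained space and its complex interpolant is computed as the kernel of the (consistent) trace map. It is here that the dependence of $\varepsilon$ on the pair $(V,H)$ enters, through the interpolation constants of the trace space $(V^*,V)_{1-1/p,p}$.

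With the scales in place, the operators $\mathcal{L}_p$ are consistent — they coincide on $\mathbb{M}_{p_0}\cap\mathbb{M}_{p_1}$ since $u'-\mathcal{A}u$ does not depend on $p$ — and uniformly bounded: differentiation sends $H^{1,p}(0,1;V^*)$ into $L^p(0,1;V^*)$ with norm $1$, and $u\mapsto\mathcal{A}u$ sends $L^p(0,1;V)$ into $L^p(0,1;V^*)$ with norm at most $\Lambda$ by the pointwise bound on $\mathcal{A}(t)$. Sneiberg's theorem applied to the consistent family $\mathcal{L}_{p_\theta}\colon[\mathbb{M}_{p_0},\mathbb{M}_{p_1}]_\theta\to[L^{p_0}(0,1;V^*),L^{p_1}(0,1;V^*)]_\theta$ then says that the set of $\theta$ for which $\mathcal{L}_{p_\theta}$ is an isomorphism is open and that, quantitatively, invertibility at the value $\theta_\ast$ with $p_{\theta_\ast}=2$ (with $\|\mathcal{L}_{2}^{-1}\|\leq M(\Lambda,\lambda)$ and $\sup_\theta\|\mathcal{L}_{p_\theta}\|\leq N(\Lambda)$) forces invertibility for $|\theta-\theta_\ast|<c_0/(MN)$, where $c_0$ also absorbs the interpolation constants from the previous paragraph. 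Translating back through $p\mapsto\theta$ and using that $p_0,p_1$ were arbitrary yields an interval $(2-\varepsilon,2+\varepsilon)$, with $\varepsilon=\varepsilon(\Lambda,\lambda,(V,H))$, on which $\mathcal{L}_p$ is an isomorphism; consistency guarantees the solution produced agrees with Lions' solution on step functions valued in $H$, so Problem~\ref{problem1} indeed has a positive answer there.

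The only step that is not purely formal is the verification that $\{\mathbb{M}_p\}_p$ is a complex interpolation scale with controlled constants. The intersection part is routine, but the zero initial trace requires the consistent bounded right inverse of the trace map and a kernel-interpolation lemma, and this is also what introduces the dependence of $\varepsilon$ on $(V,H)$ rather than on $\Lambda,\lambda$ alone. Once this is granted, Lions' theorem supplies the base point of the scale and Sneiberg's lemma does the rest.
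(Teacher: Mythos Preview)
The paper does not supply its own proof of this theorem: it merely attributes the result to Sneiberg's stability principle and refers to \cite[Thm.~4.2]{DiElRe}. Your proposal is precisely a fleshed-out version of that argument --- set up the parabolic operator as a consistent family acting between complex interpolation scales, invoke Lions at $p=2$ as the anchor isomorphism, and apply Sneiberg --- so it is fully in line with what the paper points to, and the sketch is correct.
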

It will be clear from Theorem~\ref{thm:LpHmin1} that one cannot improve the latter to all $p\in (1, \infty)$, even if $f\in L^p(0,1;H)$.

\subsection{A preliminary counterexample from the literature}
\label{Subsec:Mooney}

Our proof relies on an example by the second-named author from \cite{Mooney2}. We recall some of the details of that example here for the reader's convenience. In \cite{Mooney2} complex-valued solutions $\zeta: (-\infty,\,1) \times \mathbb{R}^d \rightarrow \mathbb{C}$ to linear, uniformly parabolic PDEs with complex coefficients $B: (-\infty,\,1) \times \mathbb{R}^d \rightarrow \mathbb{C}^{d \times d}$  of the form
\begin{align}\label{parabolic}
	\partial_t\zeta = \text{div}(B(t,\,x)\nabla \zeta)
\end{align}
are constructed, with the following properties. First, (\ref{parabolic}) holds in the sense of distributions. Moreover, $\zeta$ and $B$ are smooth away from $(-\infty,\,1) \times \{0\}$ (hence the equation holds classically away from $\{x = 0\}$), and $\zeta$ is locally Lipschitz on $(-\infty,\,1) \times \mathbb{R}^d$. Second, the solutions are chosen to obey certain scaling symmetries. Solutions of the following form are constructed:
\begin{align}\label{symm1}
	\zeta(t,\,x) = (1-t)^{-\mu/2}e^{-i\log(1-t)/2}w(x/(1-t)^{1/2}),
\end{align}
\begin{align}\label{symm2}
	B(t,\,x) = a(x/(1-t)^{1/2}),
\end{align}
for appropriate choices of parameter $\mu \in \mathbb{R}$, function $w : \mathbb{R}^d \rightarrow \mathbb{C}$, and uniformly elliptic coefficients $a : \mathbb{R}^d \rightarrow \mathbb{C}^{d \times d}$. The parabolic PDE (\ref{parabolic}) is equivalent to the following elliptic PDE for $w$ on $\mathbb{R}^d$:
\begin{align}\label{elliptic}
	\text{div}(a(x)\nabla w) = \frac{1}{2}(iw + \mu w + x \cdot \nabla w).
\end{align}
In \cite{Mooney2} it is shown that for any choice of the parameter $\mu$ satisfying
\begin{align}\label{mubound}
	0 \leq \mu < d/2,
\end{align}
one can find a uniformly elliptic matrix field $a$ on $\mathbb{R}^d$ that is smooth away from $0$, and a solution $w$ to ~\eqref{elliptic} on $\mathbb{R}^d$ that is smooth away from $0$ and locally Lipschitz on $\mathbb{R}^d$, such that the following estimates are satisfied: for all multi-indices $\alpha$ and all $|x| \geq 1$, we have
\begin{align}\label{Est1}
	|\partial^{\alpha}w(x)| \leq C_{\alpha}|x|^{-|\alpha|-\mu},
\end{align}
\begin{align}\label{Est2}
	|\partial^{\alpha}a(x)| \leq C_{\alpha}|x|^{-|\alpha|}.
\end{align}
The desired solutions $\zeta$ and coefficients $B$ for the parabolic problem (\ref{parabolic}) are then obtained through~\eqref{symm1} and~\eqref{symm2}. We stress that the coefficients $a$ satisfy the boundedness and uniform ellipticity conditions
\begin{align}\label{ellipticity}
	|a(x)\xi| \leq \Lambda |\xi|, \quad \text{Re}(a(x)\xi \cdot \overline{\xi}) \geq \lambda |\xi|^2
\end{align}
for some $\lambda,\,\Lambda > 0$ depending on $\mu$ and all $x \in \mathbb{R}^d$ and $\xi \in \mathbb{C}^d$, so $B$ satisfies the conditions (1.1) with the same $\lambda$ and $\Lambda$.

We will only use the properties discussed above in the sequel. For the details of how $w$ and $a$ are chosen, the interested reader can consult Section 3 in
\cite{Mooney2}. Roughly speaking, for judicious choices of the form of $w$ and $a$, one can reduce the PDE (\ref{elliptic}) to a system of ODEs. These can be solved by fixing a choice of $w$ and solving for the coefficients in $a$.

\begin{remark}
	When split into real and imaginary parts, the equation (\ref{parabolic}) can be understood as a system of two equations, which are coupled through the imaginary part of $B$. An important philosophical point in \cite{Mooney2} is that when the imaginary part of $B$ is taken to be symmetric, that part of $B$ doesn't contribute to the ellipticity condition. This allows strong coupling of the equations without breaking the ellipticity condition.
\end{remark}

\subsection{Failure of $L^r(L^s)$-integrability for variational solutions}

The following counterexample is based on the construction from the previous subsection and is the basis for our subsequent counterexamples to Problems~\ref{problem1} and~\ref{problem2}.

\begin{theorem}[Failure of higher integrability]
	\label{thm:LrLscounter}
	Let $d \geq 2$. For every $s,r \in (1,\infty)$ with $\frac{2}{r} + \frac{d}{s} < \frac{d}{2}$ there exists an elliptic matrix $B \colon [0,1] \times \R^d \to \C^{d\times d}$ and $f\in L^\infty(0,1; L^2(\R^d))$ such that the unique solution $u$ to~\eqref{eq:elliptic_eq}
	satisfies $u \not\in L^r(0,1; L^s(\R^d))$.
\end{theorem}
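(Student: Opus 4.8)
The plan is to follow the self-similar philosophy of \cite{Mooney2}: build $B$ and $u$ explicitly so that $u$ behaves like a bounded-in-time family of functions with worse and worse spatial singularities concentrated near the origin as $t\downarrow 0$ (or near a countable set of times accumulating at some $t_0$). Concretely, I would first construct a single \emph{building block}: a divergence-form elliptic operator $\mathcal{A}_0 = -\div(B_0\nabla\cdot)$ on $\R^d$ (time-independent, or a short time-dependent cycle) together with a datum whose solution, at the end of a unit time interval, is an explicit profile $w$ that is in $L^2(\R^d)$ but just barely fails to be in $L^s$ for the target $s$ — e.g.\ $w(x)\sim |x|^{-\beta}\eta(x)$ with a cutoff $\eta$ and $\beta$ chosen so $\beta < d/2$ (so $w\in L^2$) but $\beta \ge d/s$ would already be the borderline; one pushes slightly past. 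The ellipticity constants $\Lambda,\lambda$ of $B_0$ must be uniform and independent of how singular the profile is — this is exactly where Mooney's construction is used, since such singular stationary (or time-periodic) solutions to uniformly elliptic divergence-form equations exist in dimension $d\ge 2$ but not for $d=1$, matching the hypothesis.

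Next I would \emph{rescale and stack}. Using parabolic scaling $(t,x)\mapsto(\lambda^2 t,\lambda x)$, which preserves the class of uniformly elliptic divergence-form equations with the \emph{same} constants $\Lambda,\lambda$, I produce copies of the building block living on time intervals $(t_k, t_{k+1})$ with $t_k\uparrow t_0\le 1$ and spatial scales $\lambda_k\to 0$. On the $k$-th interval the solution looks like $u(t,x)\approx c_k w\big((x)/\lambda_k\big)$ near $t_{k+1}$, so $\|u(t_{k+1})\|_{L^2}\approx c_k \lambda_k^{d/2-\beta}$ while $\|u(t_{k+1})\|_{L^s}\approx c_k\lambda_k^{d/s-\beta}$ (when finite). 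Choosing the amplitudes $c_k$ and the interval lengths $|t_{k+1}-t_k|$ so that the $L^2$ norms stay bounded (giving $f\in L^\infty(0,1;L^2)$ after checking the forcing term, which is $u' - \mathcal{A}u$ and is supported where we glue the blocks) but the $L^s$ norms blow up fast enough that $\int_0^1 \|u(t)\|_{L^s}^r\,dt = \infty$. The scaling relation $\|u(t)\|_{L^s}\sim \lambda_k^{-(\beta - d/s)}$ versus the interval length $\sim \lambda_k^{2}$ is precisely what makes the condition $\tfrac2r + \tfrac ds < \tfrac d2$ the natural threshold: one needs the exponent $\beta$ to satisfy $d/s < \beta < d/2$ and simultaneously $r(\beta - d/s) \ge 2$ (up to the endpoint), and such a $\beta$ exists exactly under the stated inequality. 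I would carry out this bookkeeping carefully, solving for admissible $\beta$, then $c_k,\lambda_k$, in that order.

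The gluing requires care. Between consecutive blocks I must interpolate the coefficient matrix and the solution so that (i) $B$ stays uniformly elliptic with the global constants, (ii) $u$ remains the \emph{Lions} weak solution of $u'-\div(B\nabla u)=f$ with $u(0)=0$ on all of $(0,1)$, and (iii) the resulting $f = u' - \div(B\nabla u)$ lies in $L^\infty(0,1;L^2(\R^d))$. The cleanest route is to \emph{define $u$ first} (a fixed smooth-in-time, spatially-rescaled-profile ansatz away from $t_0$, and $u\equiv 0$ near $t=0$), define $B$ block-by-block as the rescaled $B_0$ with smooth transitions, and then simply \emph{declare} $f := u' - \div(B\nabla u)$; uniqueness from Theorem~\ref{thm:Lions} then guarantees $u$ is \emph{the} solution. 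One must only verify $f\in L^\infty(0,1;L^2)$: on the body of each block this is the building-block estimate under rescaling (scale-invariant if $B_0$'s datum is chosen $L^2$-critical), and on the transition regions it is a crude estimate using that $u$ and its derivatives are controlled there by the block amplitudes $c_k$, which decay (or are bounded). The main obstacle is this last point — ensuring the transitions do not destroy either the uniform ellipticity or the $L^\infty_t L^2_x$ bound on $f$ — together with the existence of a building block with a genuinely $L^2$-but-not-$L^s$ profile and uniformly elliptic coefficients, which is the heart of the matter and is supplied by \cite{Mooney2}; dimension $d\ge2$ enters here because in one space dimension uniformly elliptic divergence-form equations enjoy too much regularity for such a profile to exist.
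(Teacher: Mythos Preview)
Your scaling heuristic is correct—the condition $\tfrac{2}{r}+\tfrac{d}{s}<\tfrac{d}{2}$ is precisely the parabolic scaling threshold, and the idea of defining $u$ first and declaring $f:=\partial_t u-\div(B\nabla u)$ is exactly what the paper does. But the discrete stacking-and-gluing architecture you propose is substantially more elaborate than the paper's route, and there is a mismatch between your description of the building block and what \cite{Mooney2} actually supplies.

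The paper's argument is direct: Mooney constructs a \emph{continuous} self-similar solution
\[
\zeta(t,x)=(1-t)^{-\mu/2}\,e^{-\tfrac{i}{2}\log(1-t)}\,w\bigl(x/(1-t)^{1/2}\bigr)
\]
to $\partial_t\zeta=\div(B\nabla\zeta)$ on $(0,1)\times\R^d$, where the profile $w$ is \emph{bounded and Lipschitz}—not spatially singular like $|x|^{-\beta}$—and the blow-up as $t\uparrow 1$ comes entirely from the time prefactor $(1-t)^{-\mu/2}$. One then sets $u(t,x)=t\,\zeta(t,x)\,\eta(x)$ with a single spatial cutoff $\eta$, computes $f=\partial_t u-\div(B\nabla u)$ explicitly, and checks by hand that $f\in L^\infty(0,1;L^2)$, $u\in L^2(0,1;H^1)$, and $u\notin L^r(0,1;L^s)$. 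No countable family of blocks, no time-gluing, no transition coefficients.

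The concrete gap in your plan is the building block itself. You want an endpoint profile $w\sim|x|^{-\beta}\eta(x)$, spatially singular at the origin, produced by a finite-time parabolic evolution with uniformly elliptic coefficients and $L^2$ forcing. Mooney does not give you this: his $w$ is bounded, and his solution is smooth at every fixed $t<1$. If you instead take the building block to be a finite segment of Mooney's self-similar solution (so the endpoint is smooth with large but finite $L^s$ norm), your bookkeeping $\|u(t_{k+1})\|_{L^s}\approx c_k\lambda_k^{d/s-\beta}$ no longer follows as written, and stacking parabolically rescaled copies of such segments with $\lambda_k\to 0$ just reconstructs the continuous self-similar solution—at which point the gluing and transition machinery is dead weight. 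The paper bypasses all of it by using $\zeta$ in one piece and absorbing the initial condition and spatial localization with the simple factor $t\,\eta(x)$.
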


\begin{proof}
	Pick $\frac{2}{r} + \frac{d}{s} < \mu < \frac{d}{2}$ and fix corresponding $w$, $a$, $\zeta$, and $B$ as described in Section~\ref{Subsec:Mooney}.

	\noindent\textbf{Step 1}: general setup.
	It is clear from~\eqref{parabolic} and the properties of $w$ and $B$ that the weak derivative $\partial_t \zeta$ exists on $(0,T)$ as an $L^2(B_R)$-valued function, for any $T<1$ and $R>0$, where $B_R$ is a shorthand notation for the Euclidean ball of radius $R$ in $\R^d$.
	Now let
	\begin{align*}
		u(t,x) &= t \zeta(t,x) \eta(x),
		\\ f(t,x) &= \eta(x)\zeta(t,x) -  t \sum_{k,\ell=1}^d [B_{k,\ell}(t,x)+ B_{\ell,k}(t,x)]\partial_k \zeta(t,x)\partial_\ell \eta(x) \\
		&\qquad\qquad\qquad\qquad\qquad-  t \zeta(t,x) \div(B(t,x) \nabla \eta(x)),
	\end{align*}
	where the cut-off function $\eta\in C^\infty_c(\R^d)$ is such that $\eta(x)= 1$ for $|x|\leq 1$, $\eta(x)=0$ for $|x|\geq 2$ and $0\leq \eta\leq 1$ on $\R^d$.

	We claim that $u\in L^2(0,1;H^1(\R^d))\cap H^1(0,1;H^{-1}(\R^d))$, $f\in L^\infty(0,1;L^2(\R^d))$ and $u$ is the unique solution to \eqref{eq:elliptic_eq}. Then, using \eqref{parabolic} it is elementary to check that for $t\in (0,1)$ and $x\in \R^d\setminus\{0\}$ one pointwise has
	\begin{equation*}
		\left\{
		\begin{aligned}
			\partial_t u(t,x) - \div(B(t,x)\nabla u(t,x)) &= f(t,x), \\
			u(t,0)&=0.
		\end{aligned}\right.
	\end{equation*}
Moreover, it also holds in distributional sense on $(0,1)\times \R^d$. Hence, by density and \eqref{eq:weakabstract}, it is a solution to \eqref{eq:elliptic_eq}.

	Therefore, it remains to check $u\in L^2(0,1;H^1(\R^d))$ and $f\in L^\infty(0,1;L^2(\R^d))$ in order to find that $u$ is the unique solution to \eqref{eq:elliptic_eq} provided by Theorem \ref{thm:Lions}.

	\noindent\textbf{Step 2}: $u \in L^2(0,1; H^1(\R^d))$.
	Fix $t\in (0,1)$ and let $\alpha$ be a multiindex with $|\alpha| \leq 1$. By definition of $\zeta$ and using~\eqref{Est1} one has
	\begin{align}
		\label{eq:estuniformu}
		\begin{split}
			\|\partial^\alpha \zeta(t,\cdot)\|_{L^2(B_2)} &\leq  (1-t)^{-\frac{\mu+|\alpha|}{2} + \frac{d}{4}} \|\partial^\alpha w\|_{L^2(B_{{2}/{(1-t)^{1/2}}})}
			\\ &\leq (1-t)^{-\frac{\mu+|\alpha|}{2} + \frac{d}{4}}\Big[C_{d,w} + C_{0} \||\cdot|^{-\mu-|\alpha|}\|_{L^2(B_{{2}/{(1-t)^{1/2}}}\setminus B_1)}\Big]
			\\ & \leq  (1-t)^{-\frac{\mu+|\alpha|}{2} + \frac{d}{4}} \Big[C_{d,w} + C_{0}C_{d} (1-t)^{\frac{\mu+|\alpha|}{2} - \frac{d}{4}} \Big]\\
			&\leq C_{d,w} (1-t)^{-\frac{\mu+|\alpha|}{2} + \frac{d}{4}} + C_{d,w}'
		\end{split}
	\end{align}
	for some constant $C_{d,w}'$ only depending on $d$ and $w$ and with $$C_{d,w} = |B_1|^{1/2} \sup_{x\in B_1} (|w(x)| + |\nabla w(x)|).$$ Note that $C_{d,w}$ is finite since $w$ is Lipschitz. Recall that $\frac{d}{4} - \frac{\mu}{2}$ is positive. Thus, by definition of $u$ and using~\eqref{eq:estuniformu} we find
	\begin{align*}
		\| u(t,\cdot) \|_{L^2(\R^d)} \leq t \|\zeta(t,\cdot)\|_{L^2(B_2)} \leq C_{d,w} + C_{d,w}'.
	\end{align*}
	Thus, $u$ is bounded as an $L^2(\R^d)$-valued function and therefore in particular $u \in L^2(0,1; L^2(\R^d))$. Similarly,
	\begin{align*}
		\|\nabla u(t,\cdot)\|_{L^2(\R^d)} &\leq  \|\nabla \zeta(t,\cdot)\|_{L^2(B_2)} +  \|\nabla \eta\|_{\infty} \|\zeta(t,\cdot)\|_{L^2(B_2\setminus B_1)} \\
		&\leq C_{d,w} (1-t)^{-\frac{\mu+1}{2} + \frac{d}{4}} + C_{d,w}' + C_{d,w}'\|\eta\|_{\infty}.
	\end{align*}
	By choice of $\mu$ we have $-\frac{\mu+1}{2} + \frac{d}{4} > -\frac{1}{2}$, therefore also $\nabla u \in L^2(0,1; L^2(\R^d))$.

	\noindent\textbf{Step 3}: $f\in L^\infty(0,1; L^2(\R^d))$.
	To estimate the norm of $f$ we consider all parts separately. The fact that $\eta \zeta\in L^\infty(0,1;L^2(\R^d))$ follows from \eqref{eq:estuniformu}. Due to the support properties of $\nabla \eta$ and the boundedness of $B$ it suffices to prove a uniform estimate for $\|\partial_m \zeta(t,\cdot)\|_{L^2(B_2\setminus B_1)}$ to estimate $B_{k,\ell}\partial_m \zeta \partial_n \eta$. For this calculate with~\eqref{Est1} that
	\begin{align}
		\label{eq:zeta_bound}
		\begin{split}
		\|\partial_m \zeta(t,\cdot)\|_{L^2(B_2\setminus B_1)} &= (1-t)^{-\frac{\mu}{2} - \frac12} \| (\partial_m w) (\cdot/(1-t)^{1/2} )\|_{L^2(B_2\setminus B_1)}
		\\
		&\leq C_{1} \||\cdot|^{-1-\mu} \|_{L^2(B_2\setminus B_1)}.
		\end{split}
	\end{align}
	It remains to estimate $\zeta \div(B \nabla \eta)$. Again, due to the support properties of $\nabla \eta$, it suffices to consider $x\in B_2\setminus B_1$. First, by \eqref{Est1} and definition of $\zeta$,
	\[|\zeta(t,x)|\leq (1-t)^{-\mu/2} |w(x/(1-t)^{1/2})| \leq C_{1,0}.\]
	Thus it remains to estimate $\|\partial^{\alpha} B_{k,\ell}(t,\cdot) \partial^{\beta} \eta\|_{L^2(B_2\setminus B_1)}$ for $|\alpha|\leq 1$ and $1\leq |\beta| \leq 2$. Since $B_{k,\ell}$ and $\partial^{\beta} \eta$ are uniformly bounded, it is enough to show that $\|\partial^{\alpha} B_{k,\ell}(t,\cdot)\|_{L^2(B_2\setminus B_1)}$ is uniformly bounded for $|\alpha|=1$.
	Indeed, this follows from the analogous calculation to~\eqref{eq:zeta_bound} using~\eqref{Est2} instead of~\eqref{Est1}.

	\noindent\textbf{Step 4}: $u \not\in L^r(0,1; L^s(\R^d))$.
	Similar to~\eqref{eq:estuniformu} but using $B_1\subseteq B_{{1}/{(1-t)^{1/2}}}$,
	\begin{align*}
		\|\zeta(t,\cdot)\|_{L^s(B_1)} = (1-t)^{-\frac{\mu}{2} + \frac{d}{2s}}  \|w\|_{L^s(B_{1/(1-t)^{1/2}})} \geq (1-t)^{-\frac{\mu}{2} + \frac{d}{2s}}  \|w\|_{L^s(B_{1})}.
	\end{align*}
	Therefore, with $t \in (\frac{1}{2},1)$,
	\begin{align*}
		\| u(t,\cdot) \|_{L^s(\R^d)} \geq t \|\zeta(t,\cdot)\|_{L^s(B_1)} \geq \frac{1}{2} (1-t)^{-\frac{\mu}{2} + \frac{d}{2s}}  \|w\|_{L^s(B_{1})}.
	\end{align*}
	By choice of $\mu$, $r$ and $s$ we have $-\frac{\mu}{2} + \frac{d}{2s} < -\frac{1}{r}$, therefore $u \not\in L^r(0,1; L^s(\R^d))$.
\end{proof}

\subsection{Negative result concerning problem \ref{problem1}}

The following result shows that for time-dependent operators in the variational setting, maximal $L^2$-regularity cannot be extrapolated to maximal $L^p$-regularity for $p\neq 2$ besides the small interval given in Theorem \ref{thm:Sneiberg}. It answers Problem \ref{problem1} in a negative way in the setting of elliptic differential operators.

\begin{theorem}[Failure of extrapolation of maximal $L^p$-regularity]\label{thm:LpHmin1}
Let $d\geq 2$. For every $p\in (1, \infty)\setminus\{2\}$
there exists $B:[0,1]\times \R^d\to \C^{d\times d}$ elliptic
and $f \in L^p(0,1;H^{-1}(\R^d))$ such that the unique solution $u$ to~\eqref{eq:elliptic_eq}
satisfies $u\notin L^p(0,1;H^{1}(\R^d))$.
\end{theorem}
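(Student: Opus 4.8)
The plan is to split the exponent range into $p > 2$ and $p < 2$ and treat them by different routes: the first is a direct consequence of Theorem~\ref{thm:LrLscounter} via a Sobolev embedding, and the second is obtained from the first by a time-reversal duality argument.

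For $p > 2$, note first that $L^p(0,1;H^{-1}(\R^d)) \subseteq L^2(0,1;H^{-1}(\R^d))$, so ``the unique solution'' is unambiguously the one from Theorem~\ref{thm:Lions}. I would choose $s$ so that simultaneously $H^1(\R^d) \hookrightarrow L^s(\R^d)$ and $\frac 2p + \frac ds < \frac d2$: concretely $s = \frac{2d}{d-2}$ when $d \ge 3$, in which case the second condition reads $\frac 2p < 1$ and so holds precisely because $p > 2$; and any sufficiently large finite $s$ when $d = 2$. Applying Theorem~\ref{thm:LrLscounter} with $r = p$ and this $s$ yields an elliptic $B$ and $f \in L^\infty(0,1;L^2(\R^d)) \subseteq L^p(0,1;H^{-1}(\R^d))$ whose solution $u$ is not in $L^p(0,1;L^s(\R^d))$; since $L^p(0,1;H^1(\R^d)) \hookrightarrow L^p(0,1;L^s(\R^d))$, we get $u \notin L^p(0,1;H^1(\R^d))$. (One even has $f \in L^p(0,1;L^2(\R^d))$, which gives the sharpness of Theorem~\ref{thm:Sneiberg} announced in the introduction.)

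For $p < 2$, I would reduce to the previous case at the conjugate exponent $p' > 2$. Fix $B_0$ elliptic and $f_0 \in L^\infty(0,1;L^2(\R^d))$ with solution $u_0 \notin L^{p'}(0,1;H^1(\R^d))$ (note $u_0$ still lies in $L^2(0,1;H^1(\R^d))$, which is compatible since $L^2(0,1) \not\subseteq L^{p'}(0,1)$ for $p' > 2$), and set $B(t,x) := B_0(1-t,x)^{*}$, which is again elliptic with the same constants. The choice is made so that, after reversing time, the backward adjoint equation $-v' - \div(B^{*}\nabla v) = g$, $v(1) = 0$, is exactly the forward $B_0$-equation; in particular $v(t) := u_0(1-t)$ solves the backward adjoint equation with data $g = f_0(1-\cdot)$. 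For an $H$-valued step function $f$, with $u$ the solution of~\eqref{eq:elliptic_eq} for $B$, integrating by parts in time (boundary terms vanish since $u(0) = 0$ and $v(1) = 0$) and in space, and using $\langle B\nabla u, \nabla v\rangle = \langle \nabla u, B^{*}\nabla v\rangle$, gives
\[ \int_0^1 \langle f(t), u_0(1-t)\rangle\, dt = \int_0^1 \langle u(t), f_0(1-t)\rangle\, dt. \]
The right-hand side is at most a constant (depending on $\|f_0\|_{L^\infty(0,1;L^2)}$) times $\|u\|_{L^p(0,1;H^1)}$, while the left-hand side is unbounded over $\{\|f\|_{L^p(0,1;H^{-1})} \le 1\}$: indeed $u_0(1-\cdot) \notin L^{p'}(0,1;H^1(\R^d))$ and $H$-valued step functions are dense in $L^p(0,1;H^{-1}(\R^d))$, so the functional $f \mapsto \int_0^1\langle f, u_0(1-\cdot)\rangle$ has norm $\|u_0\|_{L^{p'}(0,1;H^1)} = \infty$ already on the step functions. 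Hence no estimate $\|u\|_{L^p(0,1;H^1)} \le C\|f\|_{L^p(0,1;H^{-1})}$ can hold over step functions $f$, which by the reformulation of Problem~\ref{problem1} in the introduction is the failure of maximal $L^p$-regularity for $B$. To extract a single forcing term $f \in L^p(0,1;H^{-1}(\R^d))$ with $u \notin L^p(0,1;H^1(\R^d))$, I would invoke a closed graph argument together with the natural extension of the solution map to rough data; such an $f$ necessarily lies outside $L^2(0,1;H^{-1}(\R^d)) + L^1(0,1;L^2(\R^d))$, consistent with the fact that for data in that class the solution already belongs to $L^2(0,1;H^1(\R^d)) \subseteq L^p(0,1;H^1(\R^d))$.

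The case $p > 2$ is immediate once Theorem~\ref{thm:LrLscounter} is available, and all the delicate points are in the case $p < 2$. The hard part will be the bookkeeping in the duality reduction (checking that with $B(t,x) = B_0(1-t,x)^{*}$ the backward adjoint of the $B$-problem really is, after time reversal, the $B_0$-problem, and that the integration-by-parts and boundary-term manipulations are licit for the solution classes at hand), together with pinning down the right notion of ``unique solution'' for data in $L^p(0,1;H^{-1}) \setminus L^2(0,1;H^{-1})$ and the soft passage from the failure of the a~priori estimate to a single bad datum. None of these steps is deep, but they must be set up carefully.
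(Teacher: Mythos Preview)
Your approach is essentially the same as the paper's: for $p>2$ you both reduce to Theorem~\ref{thm:LrLscounter} via the Sobolev embedding with the identical choice of exponents, and for $p<2$ you both run a time-reversal duality argument pairing the $B$-equation with the backward $B^*$-equation to transfer the failure from the conjugate exponent $p'>2$. Your explicit construction $B(t,x)=B_0(1-t,x)^*$ is exactly what the paper's specialization $A=(\wt{B})^*$ amounts to, and the integration-by-parts identity you derive is the paper's identity $\int_0^1\langle g,u\rangle\,dt=\int_0^1\langle v,f\rangle\,dt$; the paper likewise stops at the failure of the a~priori estimate and relies on the equivalence with Problem~\ref{problem1} asserted in the introduction rather than spelling out the closed-graph step you flag.
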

\begin{proof}
We divide the proof into two cases.

\textbf{Case 1}: $p > 2$. We appeal to Theorem~\ref{thm:LrLscounter}. If $d \geq 3$ put $r = p$ and $s = 2^* \coloneqq \frac{2d}{d-2}$ and if $d = 2$ put $r = p$ and $s > \frac{2p}{p-2}$. In both cases the condition $\frac{2}{r} + \frac{d}{s} < \frac{d}{2}$ is satisfied, so that Theorem~\ref{thm:LrLscounter} yields $B:[0,1]\times \R^d\to \C^{d\times d}$ elliptic and $f\in L^\infty(0,1; L^2(\R^d)) \subseteq L^p(0,1; L^2(\R^d))$ such that the unique solution $u$ to~\eqref{thm:LrLscounter} satisfies $u \not\in L^r(0,1; L^s(\R^d))$. This implies $u \not\in L^p(0,1; H^1(\R^d))$ since otherwise the Sobolev embedding yields a contradiction to the previous assertion.

\textbf{Case 2}: $p < 2$. We use a duality argument that resembles~\cite[Thm.~6.2]{Krylov07}. By the first part there are $f \in L^{p'}(0,1; H^{-1}(\R^d))$, elliptic coefficients $B \colon [0,1] \times \R^d \to \C^{d\times d}$ and a solution $u \in L^2(0,1; H^1(\R^d)) \cap H^1(0,1; H^{-1}(\R^d))$ to the equation
\begin{equation}
	\label{eq:heat_contra}
	\left\{
	\begin{aligned}
		\partial_t u - \div(B\nabla u) &= f, \\
		u(0)&=0,
	\end{aligned}\right.
\end{equation}
such that $u \not\in L^{p'}(0,1; H^1(\R^d))$. Assume for the sake of contradiction that for every $A \colon [-1,0] \times \R^d \to \C^{d\times d}$ and $\wt{g} \in L^2(-1,0; H^{-1}(\R^d))$ there is a unique solution $\wt{v} \in L^2(-1,0; H^1(\R^d)) \cap H^1(-1,0; H^{-1}(\R^d))$ to the problem
\begin{equation}
	\label{eq:heat_dual}
	\left\{
	\begin{aligned}
		\partial_t \wt{v} - \div(A\nabla \wt{v}) &= \wt{g}, \\
		\wt{v}(-1)&=0,
	\end{aligned}\right.
\end{equation}
satisfying the estimate
\begin{align}
	\label{eq:heat_dual_bound}
	\| \partial_t \wt{v} \|_{L^p(-1,0;H^{-1}(\R^d))} + \| \wt{v} \|_{L^p(-1,0; H^1(\R^d))} \leq C \| \wt{g} \|_{L^p(-1,0; H^{-1}(\R^d))},
\end{align}
where the constant $C>0$ does not depend on $\wt{v}$ and $\wt{g}$. By translation the question on $(-1,0)$ is equivalent to that on $(0,1)$. Both intervals are related by the transformation $v \mapsto -v$. We write for instance $\wt{u}(t) = u(-t)$ to translate $u$ to a function on $(-1,0)$ and vice versa. Specialize $A = (\wt{B})^*$ in~\eqref{eq:heat_dual}. Since $u(0) = 0 = v(1)$ we can use integration by parts to obtain
\begin{align*}
	\int_0^1 \langle g, u \rangle dt = \int_{-1}^0 \langle \wt g, \wt u \rangle dt
	&= \int_{-1}^0 \langle (\wt v)', \wt u \rangle + ((\wt{B})^* \nabla \wt v | \nabla \wt u)_2 dt \\
	&= \int_0^1 - \langle v', u \rangle + (\nabla v | B \nabla u)_2 dt \\
	&= \int_0^1 \langle v, u' \rangle + (\nabla v | B \nabla u)_2 dt.
\end{align*}
Plug in~\eqref{eq:heat_contra} to deduce
\begin{align*}
	\int_0^1 \langle g, u \rangle dt = \int_0^1 \langle v, f \rangle dt.
\end{align*}
Hence, using~\eqref{eq:heat_dual_bound} we can estimate
\begin{align*}
	\bigl| \int_0^1 \langle g, u \rangle dt \bigr| & \leq \| v \|_{L^p(0,1; H^1(\R^d))} \| f \|_{L^{p'}(0,1; H^{-1}(\R^d))}
\\ & \leq C \| g \|_{L^p(0,1; H^{-1}(\R^d))} \| f \|_{L^{p'}(0,1; H^{-1}(\R^d))}.
\end{align*}
Since $g$ was arbitrary, deduce by duality the contradiction $u \in L^{p'}(0,1; H^1(\R^d))$.
\end{proof}

\begin{remark}
For the case $p>2$, we saw that although $f\in L^\infty(0,1;L^2(\R^d))$, the unique solution $u$ to~\eqref{eq:elliptic_eq}
satisfies $u\notin L^p(0,1;H^{1}(\R^d))$.
\end{remark}

\subsection{Negative result concerning problem \ref{problem2}}
We show that Problem~\ref{problem2} fails in the worst possible way in the case of elliptic operators in divergence form: for every $\nu\in (1/2,1]$ there exist coefficients $B$ and a forcing term $f\in L^\infty(0,1;L^2(\R^d))$ such that the unique solution $u$ to~\eqref{eq:elliptic_eq} satisfies $u\notin H^{\nu}(0,1;L^2(\R^d))$. This solves Lions' problem in a negative way also for elliptic operators. It would still be interesting to find a counterexample for $\nu =1$ where the coefficients are H\"older continuous of order $\alpha$ for some $\alpha\leq 1/2$, and this would then be an optimal counterexample due to the positive results for which we refer to the survey \cite{Lions-Survey}. The present coefficient function cannot even be continuous, see Remark~\ref{rem:coefficients}. There seems to be some room for improvement in the regularity of $B$ as the regularity is much worse than $H^1$ in time.

\begin{theorem}[Failure of maximal $L^2$-regularity on $L^2(\R^d)$]\label{thm:L2L2}
Let $d\geq 2$. For every $\nu\in(1/2,1]$
there exists $B:[0,1]\times \R^d\to \C^{d\times d}$ elliptic
and $f \in L^\infty(0,1;L^2(\R^d))$ such that the unique solution $u$ to~\eqref{eq:elliptic_eq}
satisfies $u\notin H^{\nu}(0,1;L^2)$.
\end{theorem}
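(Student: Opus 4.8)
The plan is to reuse the explicit construction from the proof of Theorem~\ref{thm:LrLscounter}, since the function $\zeta$ there already encodes the relevant singularity in time as $t\uparrow 1$, and to read off the failure of fractional regularity in time from the rate of blow-up of $\|\partial_t u(t,\cdot)\|_{L^2(\R^d)}$ near $t=1$. Concretely, given $\nu\in(1/2,1]$, I would first choose $\mu\in(0,d/2)$ so that the time-singularity of $\zeta$ is strong enough; from the definition $\zeta(t,x)=(1-t)^{-\mu/2}e^{-\frac{i}{2}\log(1-t)}w(x/(1-t)^{1/2})$ one computes, using the scaling and the estimates~\eqref{eq:estw}, that $\|\zeta(t,\cdot)\|_{L^2(B_2)}\sim (1-t)^{-\mu/2+d/4}$ and $\|\partial_t\zeta(t,\cdot)\|_{L^2(B_2)}\sim (1-t)^{-\mu/2+d/4-1}$ as $t\uparrow 1$ (the phase factor and the two contributions from differentiating the prefactor and the argument of $w$ all scale the same way, so no cancellation occurs). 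Then with $u=t\zeta\eta$ and the product rule, $\|\partial_t u(t,\cdot)\|_{L^2(\R^d)}$ behaves like $(1-t)^{-\mu/2+d/4-1}$, which is \emph{not} square-integrable near $t=1$ exactly when $-\mu/2+d/4-1\le -1/2$, i.e. $\mu\ge d/2-1$; since $d\ge 2$ this leaves the nonempty admissible window $\mu\in[\,\max(0,d/2-1),\,d/2)$, so $u\notin H^1(0,1;L^2(\R^d))$ for such $\mu$. For the scale of general $\nu\in(1/2,1]$ one tunes $\mu$ closer to $d/2$: the point is that by taking $\mu$ near $d/2$ the blow-up exponent of $\|\partial_t u\|_{L^2}$ approaches $d/4-1$ which can be made as negative as $-1$ when $d=... $ — more carefully, one wants $\|u(t,\cdot)\|_{L^2}$ to grow like $(1-t)^{-\beta}$ for a $\beta$ with $\beta>\nu-1/2$, and from $\|\zeta(t,\cdot)\|_{L^2}\sim(1-t)^{-\mu/2+d/4}$ this fails for fixed $\mu<d/2$; the resolution is that it is not $u$ but a \emph{derivative} that blows up, so one should instead exploit the full strength of the $H^\nu$-seminorm.

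Thus the main step is the passage from the pointwise growth rate of time-derivatives to the failure of the fractional Sobolev norm. I would use the Gagliardo-type characterization
\[
\|u\|_{H^\nu(0,1;L^2(\R^d))}^2 \sim \|u\|_{L^2(0,1;L^2)}^2 + \iint_{(0,1)^2}\frac{\|u(t,\cdot)-u(\sigma,\cdot)\|_{L^2(\R^d)}^2}{|t-\sigma|^{1+2\nu}}\,dt\,d\sigma,
\]
and lower-bound the double integral by restricting to $\sigma<t$ both close to $1$. Here the key analytic input is a \emph{lower} bound on $\|\zeta(t,\cdot)-\zeta(\sigma,\cdot)\|_{L^2(B_1)}$. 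On $B_1$ one has $x/(1-t)^{1/2}\in B_{1/(1-t)^{1/2}}$, and by the oscillatory phase $e^{-\frac{i}{2}\log(1-t)}$ together with the prefactor $(1-t)^{-\mu/2}$, the two terms $\zeta(t,\cdot)$ and $\zeta(\sigma,\cdot)$ have genuinely different size and phase; I would isolate a fixed small ball $B_\rho\subset\{w\neq 0\}$ (possible since $w$ is continuous and not identically zero; if $w(0)=0$ one picks $\rho$ off-centre) on which $|w|\ge c>0$, and on $B_\rho$ the difference $|\zeta(t,x)-\zeta(\sigma,x)|\ge |(1-t)^{-\mu/2}e^{-\frac i2\log(1-t)}-(1-\sigma)^{-\mu/2}e^{-\frac i2\log(1-\sigma)}|\,|w(x/(1-t)^{1/2})| - (\text{error from }w(x/(1-t)^{1/2})-w(x/(1-\sigma)^{1/2}))$. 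For $x\in B_\rho$ and $t,\sigma$ near $1$ the argument $x/(1-t)^{1/2}$ is large, so by~\eqref{eq:estw} $w$ and its difference there are of lower order than the prefactor; hence $\|\zeta(t,\cdot)-\zeta(\sigma,\cdot)\|_{L^2(B_\rho)}\gtrsim (1-t)^{-\mu/2}|1 - ((1-t)/(1-\sigma))^{\mu/2}e^{\frac i2\log((1-t)/(1-\sigma))}|$ up to lower order — wait, this needs the roles symmetrized, so I would instead write it as $\gtrsim (1-\sigma)^{-\mu/2}\,\big|1-((1-\sigma)/(1-t))^{\mu/2}e^{-\frac i2\log((1-\sigma)/(1-t))}\big|$, and the modulus is bounded below by a positive constant whenever the ratio $(1-\sigma)/(1-t)$ lies in a fixed annulus away from $1$ \emph{or} the phase does — crucially $|1-\rho^{\mu/2}e^{i\theta}|\ge c>0$ unless both $\rho\approx 1$ and $\theta\approx 0\ (\mathrm{mod}\ 2\pi)$, and the phase $\theta=-\frac12\log((1-\sigma)/(1-t))$ forces oscillation. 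Plugging a lower bound of the form $\|\zeta(t,\cdot)-\zeta(\sigma,\cdot)\|_{L^2(B_\rho)}\gtrsim (1-t)^{-\mu/2+d/4}$ valid on a set of pairs $(t,\sigma)$ of positive measure into the Gagliardo integral and integrating, one obtains $\|u\|_{H^\nu}^2\gtrsim \int_0^1 (1-t)^{-\mu+d/2}(1-t)^{-2\nu}\,dt\cdot(\ldots)=\infty$ as soon as $-\mu+d/2-2\nu\le -1$, i.e. $\mu\ge d/2-2\nu+1$; since $\nu>1/2$ and $d\ge 2$ this threshold is $<d/2$, so admissible $\mu$ exist, giving $u\notin H^\nu(0,1;L^2(\R^d))$.

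I would organize the proof as follows: (i) recall the construction and the already-established facts $u\in L^2(0,1;H^1)\cap H^1(0,1;H^{-1})$, $f\in L^\infty(0,1;L^2)$, $u$ solves~\eqref{eq:elliptic_eq}, all from the proof of Theorem~\ref{thm:LrLscounter}, now with $\mu$ chosen in the window $[\,d/2-2\nu+1,\,d/2)$ (nonempty by $\nu>1/2$, and still compatible with $d/4-\mu/2>0$ needed for Step~2 there); (ii) establish the two-point lower bound on $\|\zeta(t,\cdot)-\zeta(\sigma,\cdot)\|_{L^2(B_\rho)}$; (iii) feed it into the Gagliardo seminorm and conclude divergence. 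The main obstacle is step (ii): controlling the cancellation between the smooth spatial profile $w$ and the oscillatory-in-time prefactor, i.e. showing the difference quotient of $\zeta$ in time really is of size $(1-t)^{-\mu/2+d/4}$ on a nontrivial set of pairs and is not killed by the $w$-difference. The oscillation of the phase $e^{-\frac i2\log(1-t)}$ is what makes this work — it prevents $\zeta(t,\cdot)$ from being "slowly varying" in $t$ in $L^2$ — but turning this into a clean measure-theoretic lower bound on the Gagliardo integrand requires care; an alternative, perhaps cleaner route is to test the $H^\nu$-seminorm against the dual characterization or to use the equivalent description via the Fourier transform in time of $t\mapsto\mathbf 1_{(0,1)}(t)\|u(t,\cdot)\|$-type quantities, but I expect the direct Gagliardo estimate to be the most transparent. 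Finally, for Remark~\ref{rem:coefficients} one notes that continuity of $B$ together with Lions' $L^2$-theory and the known positive results would force $u\in H^1(0,1;L^2)$ when $f\in L^\infty(0,1;L^2)$, contradicting the above, so $B$ cannot be continuous.
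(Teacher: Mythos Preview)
Your plan takes a completely different route from the paper. The paper never reopens the explicit construction; it argues indirectly using only the \emph{statement} of Theorem~\ref{thm:LrLscounter}. Given $\nu\in(1/2,1]$, choose $0<\theta<\tfrac{1}{2\nu}$ and set $\tfrac1r=\tfrac12-\nu\theta$, $\tfrac1s=\tfrac12-\tfrac{1-\theta}{d}$; then $\tfrac2r+\tfrac ds<\tfrac d2$, so Theorem~\ref{thm:LrLscounter} supplies $B$, $f$, and $u$ with $u\notin L^r(0,1;L^s(\R^d))$. If one had $u\in H^\nu(0,1;L^2)$, interpolating with the Lions regularity $u\in L^2(0,1;H^1)$ would give $u\in H^{\nu\theta}(0,1;H^{1-\theta})$, and Sobolev embedding in both variables would then yield $u\in L^r(0,1;L^s)$, a contradiction. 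This takes five lines and needs no new estimate on $\zeta$.

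Your direct Gagliardo computation is plausible in spirit, but there is a concrete scaling error that breaks the argument as written. You assert $\|\zeta(t,\cdot)\|_{L^2(B_2)}\sim(1-t)^{-\mu/2+d/4}$; since $\mu<d/2$ this exponent is \emph{positive}, so you are claiming the norm tends to zero. In fact it stays of order $1$: with $|w(y)|\sim|y|^{-\mu}$ at infinity and $\mu<d/2$ one has $w\notin L^2(\R^d)$, and the substitution $y=x/(1-t)^{1/2}$ gives $\|\zeta(t,\cdot)\|_{L^2(B_2)}^2=(1-t)^{-\mu+d/2}\int_{|y|\le 2(1-t)^{-1/2}}|w|^2\sim\text{const}$. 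The same cancellation kills your two-point bound: for fixed $x\in B_\rho$ the prefactor $(1-t)^{-\mu/2}$ is exactly offset by the decay of $w(x/(1-t)^{1/2})$, so the heuristic ``prefactor blows up, $w$-difference is lower order'' fails. (If $B_\rho$ is meant to live in $y$-space where $|w|\ge c$, then the relevant $x$-region is $(1-t)^{1/2}B_\rho$, which shrinks and does not coincide at times $t$ and $\sigma$.) What actually forces $u\notin H^\nu$ is the phase $e^{-\frac i2\log(1-t)}$ oscillating at frequency $\sim(1-t)^{-1}$; done correctly one gets $\|\zeta(t,\cdot)-\zeta(\sigma,\cdot)\|_{L^2}\gtrsim 1$ on pairs with $|t-\sigma|\sim(1-t)$, and then the Gagliardo integral dominates $\int_0^1(1-t)^{-2\nu}\,dt=\infty$ for every $\nu>\tfrac12$, with no additional constraint on $\mu$. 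So the direct route can be salvaged, but not with the scalings you wrote down, and the paper's interpolation argument bypasses all of this.
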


\begin{proof}
	The strategy is as for the case $p>2$ in Theorem~\ref{thm:LpHmin1}. Let $\nu\in (1/2,1]$ and let $0 < \theta < \frac{1}{2\nu}$. Then define parameters $r$ and $s$ through $\frac{1}{r} = \frac{1}{2} - \nu \theta$ and $\frac{1}{s} = \frac{1}{2} - \frac{1-\theta}{d}$. They satisfy the condition in Theorem~\ref{thm:LpHmin1}, consequently there is $B:[0,1]\times \R^d\to \C^{d\times d}$ elliptic
	and $f \in L^\infty(0,1;L^2(\R^d))$ such that the unique solution $u$ to~\eqref{eq:elliptic_eq} satisfies $u \notin L^r(0,1; L^s(\R^d))$. Suppose that $u\in H^{\nu}(0,1;L^2(\R^d))$. Since $u\in L^2(0,1;H^1(\R^d))$ by Theorem \ref{thm:Lions}, complex interpolation (see \cite[Theorem 14.7.12]{HNVW3}) yields $u\in H^{\nu\theta}(0,1;H^{1-\theta}(\R^d))$. By choice of $\theta$, the Sobolev embedding is applicable and gives $u\in L^r(0,1;L^s(\R^d))$, a contradiction.
\end{proof}

\begin{remark}
	\label{rem:coefficients}
	By construction, the given counterexample is at the same time a counterexample for Problem~\ref{problem1}. We have discussed in the introduction that if $t \mapsto B(t) \in L^\infty(\R^d)$ is (piecewise relatively) continuous, then Problem~\ref{problem1} is automatically true for all $p \in (1, \infty)$. Therefore, the function $B$ in Theorem~\ref{thm:L2L2} is not continuous as a map into $L^\infty(\R^d)$.
\end{remark}

The examples show some limitations of what regularity estimates can hold for elliptic operators in divergence form. However, several issues remain for Problems \ref{problem1} and \ref{problem2}. For instance, the operator $\mathcal{A}$ used in the above counterexamples is not symmetric/hermitian. We do not know what can be said for the case $d=1$. The only reference we found on the one-dimensional setting is \cite{Kry16}, where counterexamples are given in case of $L^p$-integrability in time and space for both the divergence and non-divergence setting for the range $p\in (1, 3/2)\cup (3, \infty)$. Finally, it would be interesting to see what can be said about Problem \ref{problem2} if $\mathcal{A}$ is more regular in time (i.e. continuous or H\"older continuous).

\end{document}